\newtheorem{theorem}{Theorem}[section]
\newtheorem{prop}{Proposition}[section]
\newtheorem{lemma}{Lemma}[section]
\newtheorem{defi}{Definition}[section]
\newtheorem{megj}{Remark}[section]
\begin{document}

\begin{abstract}
During the last few decades E. S. Thomas, S. J. Agronsky, J. G. Ceder, and T. L. Pearson gave an equivalent definition of the real Baire class 1 functions by characterizing their graph. In this paper, using their results, we consider the following problem: let $T$ be a given subset of $[0,1]\times\mathbb{R}$. When can we find a function $f:[0,1]\rightarrow\mathbb{R}$ such that the accumulation points of its graph are exactly the points of $T$? We show that if such a function exists, we can choose it to be a Baire-2 function. We characterize the accumulation sets of bounded and not necessarily bounded functions separately. We also examine the similar question in the case of Baire-1 functions.
\end{abstract}

\title{Accumulation Points of Graphs of Baire-1 and Baire-2 Functions}
\author{Bal\'azs Maga}
\date{}
\maketitle

\section{Introduction}

In the last sixty years, certain classes of real functions have been characterized with a description of their graphs. In the case of Baire-1 functions it is worth mentioning the article of E. S. Thomas and the article of Agronsky, Ceder, and Pearson (see \cite{B1} and \cite{B2}): in the former one an equivalent definition of bounded Baire-1 functions was given, in the latter this result was generalized for the not necessarily bounded case. In this paper we also investigate a property of graphs of Baire-1 and Baire-2 functions. The problem is the following: if $T$ is a given subset of $[0,1]\times\mathbb{R}$, when does there exist a Baire-1 or Baire-2 function $f:[0,1]\rightarrow\mathbb{R}$ such that the accumulation points of its graph are exactly the points of $T$? 

\medskip

\noindent We answer these questions in two steps in both cases. It is easier to understand the theorems and the proofs if we also require $f$ to be bounded, thus we start with this case. 

\section{Notation}

Throughout this paper we use the following notation: the graph of the real function $f$ is denoted by $G$. Analogously, the graph of $f_0$ is $G_0$. If $f$ is a real function, the set of accumulation points of $G$ is $L_f$. The vertical line given by the equation $x=r$ is denoted by $v_r$. If $H$ is a set of $\mathbb{R}^2$, and $r$ is a real number, the intersection of $v_r$ and $H$ is denoted by $H(r)$. For simplicity, if $(r,y)\in{H}$, we say that $y\in{H_r}$. The open ball with center $r$ and radius $\varepsilon$ is $B(r,\varepsilon)$. We use this notation for one-dimensional neighborhoods in $\mathbb{R}$, and also for two-dimensional neighborhoods in $\mathbb{R}^2$. We clarify this ambiguity by making clear if the center is a point of $\mathbb{R}$ or of $\mathbb{R}^2$. The interval $[0,1]$ is denoted by $I$. The cardinality of a set $H$ is $\#(H)$. The diameter of a set $H$ is $\text{diam}(H)$. Finally, if a set $A\subseteq{I}$ is the subset of the domain of $f$, and $a\in{A}$, sometimes we refer to the point $(a,f(a))$ as a point of $G$ above $A$.

\section{Preliminary Results}

In the introduction we have already mentioned the result of Agronsky, Ceder, and Pearson. This theorem will be a very useful tool for us, so it is appropriate to recall it. We need the following definition:

\begin{defi} An open set $S\subseteq\mathbb{R}^2$ is an open strip if for every $r\in{R}$ the set $S(r)$ is an open interval. \end{defi}

\noindent In [2, Theorem 2.2] a characterization of Baire-1 functions was given by using this definition:

\begin{prop} Let $f:I\rightarrow\mathbb{R}$ be a function. It is Baire-1 if and only if there is a sequence $(S_n)$ of open strips such that $\cap_{n=1}^{\infty}S_n=G$. \end{prop}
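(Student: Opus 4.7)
\medskip

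\noindent\textbf{Proof plan.} The plan is to exploit the correspondence between open strips and pairs of semicontinuous envelopes of $f$, and then reduce both directions to the characterization of Baire-1 functions as those for which $f^{-1}(U)$ is $F_\sigma$ for every open $U\subseteq\mathbb{R}$. Writing an open strip as $S(r)=(\alpha(r),\beta(r))$ (allowing infinite endpoints or the empty fiber), openness of $S\subseteq\mathbb{R}^2$ is equivalent to $\alpha$ being upper semicontinuous (USC) and $\beta$ lower semicontinuous (LSC); this translation drives both implications.

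For the easy direction, suppose $G=\bigcap_n S_n$ with $S_n(r)=(\alpha_n(r),\beta_n(r))$. The intersection condition forces $\sup_n\alpha_n(r)=f(r)=\inf_n\beta_n(r)$ for every $r\in I$. Then
\[
\{f>c\}=\bigcup_n\{\alpha_n>c\}=\bigcup_{n,k}\{\alpha_n\geq c+1/k\},
\]
and the innermost sets are closed because each $\alpha_n$ is USC; hence $\{f>c\}$ is $F_\sigma$. A symmetric argument shows $\{f<c\}$ is $F_\sigma$, so $f^{-1}$ of every open interval, and therefore every open set, is $F_\sigma$, meaning $f$ is Baire-1.

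For the harder direction, I would construct USC functions $\alpha_n<f$ with $\sup_n\alpha_n=f$ and LSC functions $\beta_n>f$ with $\inf_n\beta_n=f$; then $S_n:=\{(x,y):\alpha_n(x)<y<\beta_n(x)\}$ is an open strip and $\bigcap_n S_n=G$ by the pointwise squeeze. For the $\alpha_n$, the $F_\sigma$-structure of superlevel sets of Baire-1 functions is the key ingredient: for each rational $t$ write $\{f>t\}=\bigcup_k F^+_{t,k}$ with $F^+_{t,k}$ closed, and define $\alpha_{t,k}$ equal to $t$ on $F^+_{t,k}$ and $-\infty$ elsewhere. Each such $\alpha_{t,k}$ is USC and strictly below $f$. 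Enumerating the pairs $(t_m,k_m)$ and setting $\alpha_n:=\max_{m\leq n}\alpha_{t_m,k_m}$ yields an increasing sequence of USC functions with $\sup_n\alpha_n=f$, by density of the rationals less than $f(x)$. The dual construction using $\{f<t\}$ yields $\beta_n$.

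The main obstacle is the technical verification that each $S_n$ is genuinely an open strip in $\mathbb{R}^2$ (its fibers over every $r\in\mathbb{R}$, not merely over $r\in I$, are open intervals) and that the intersection picks up no spurious points outside $I\times\mathbb{R}$. This is arranged by extending $\alpha_n,\beta_n$ beyond $I$ so that the fibers there collapse to the empty interval as $n\to\infty$; allowing extended-real endpoints is harmless, since open rays and the empty set both qualify as open intervals in the strip definition, and the corresponding semicontinuity is preserved by the extension.
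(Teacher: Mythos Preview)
The paper does not prove this proposition at all: it is quoted as Theorem~2.2 of Agronsky--Ceder--Pearson \cite{B2} and used as a black box throughout. So there is no ``paper's own proof'' to compare against.

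Your argument is correct and self-contained. The key translation---that an open strip $S$ with fibers $S(r)=(\alpha(r),\beta(r))$ is open in $\mathbb{R}^2$ precisely when $\alpha$ is USC and $\beta$ is LSC---is valid (with the extended-real conventions you note), and both directions then reduce cleanly to the $F_\sigma$ preimage characterization of Baire-1. In the forward direction your identity $\sup_n\alpha_n=f=\inf_n\beta_n$ follows because any $y<f(r)$ must be excluded from some $S_n(r)$, forcing $\alpha_n(r)\ge y$; the rest is routine. In the reverse direction the step functions $\alpha_{t,k}$ built from the closed pieces of $\{f>t\}$ are indeed USC and strictly below $f$, and the rational-density argument gives $\sup_n\alpha_n=f$. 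The extension beyond $I$ is a genuine but minor technicality, and your handling of it (collapsing fibers off $I$) is adequate. One small point worth making explicit: you should check that $\alpha_n<\beta_n$ everywhere on $I$ so that each $S_n$ actually contains $G$; this holds since $\alpha_n(x)<f(x)<\beta_n(x)$ by construction.
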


\noindent As we will see, this theorem is a truly useful tool if our goal is to show that a certain function is Baire-1. Besides that we will also apply the following lemma, which handles a variant of our original problem.

\begin{lemma} For a given closed set $T\subseteq{I\times\mathbb{R}}$, there exists a countable set $A\subseteq{I}$ such that there is a function $f:A\rightarrow\mathbb{R}$ satisfying $L_f=T$. \end{lemma}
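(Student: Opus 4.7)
The plan is to produce $f$ by taking a countable dense subset of $T$ and slightly perturbing the first coordinates so that no two chosen points share the same $x$, which is necessary if the result is to be the graph of a function. The case $T=\emptyset$ is trivial with $A=\emptyset$, so assume $T\neq\emptyset$.

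More precisely, since $T$ is a closed subset of the separable space $I\times\mathbb{R}$, it admits a countable dense subset; if $T$ happens to be finite I would enumerate it with every point repeated infinitely often, otherwise I would use any dense enumeration $\{(x_n,y_n)\}_{n=1}^{\infty}$. For each $n$ I then choose $x_n'\in B(x_n,1/n)\cap I$ distinct from $x_1',\dots,x_{n-1}'$. This is possible because $B(x_n,1/n)\cap I$ contains a nondegenerate interval even when $x_n\in\{0,1\}$, so only finitely many values need to be avoided from an uncountable set. Setting $A=\{x_n':n\geq 1\}$ and $f(x_n')=y_n$ gives a well-defined function on a countable subset of $I$.

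The verification of $L_f=T$ splits in two. For the inclusion $T\subseteq L_f$, given $(x,y)\in T$ I pick a subsequence $(x_{n_k},y_{n_k})$ of the enumeration converging to $(x,y)$ (using density if $T$ is infinite, and the forced repetition if $T$ is finite); since $|x_{n_k}'-x_{n_k}|<1/n_k\to 0$, the points $(x_{n_k}',y_{n_k})\in G$ also converge to $(x,y)$, and they are pairwise distinct by construction, so $(x,y)$ is an accumulation point of $G$. For $L_f\subseteq T$, if $(x,y)\notin T$ then there exists $\varepsilon>0$ with $B((x,y),\varepsilon)\cap T=\emptyset$; if $(x_n',y_n)\in B((x,y),\varepsilon/2)$ and $1/n<\varepsilon/2$, the triangle inequality forces $(x_n,y_n)\in B((x,y),\varepsilon)\cap T$, a contradiction. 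Hence only finitely many points of $G$ lie close to $(x,y)$, so $(x,y)\notin L_f$.

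The main obstacle is precisely the step forcing the $x_n'$ to be distinct: the naive choice $A=\{x_n\}$ fails whenever two points of the dense subset share an $x$-coordinate, and the perturbation must be small enough to keep accumulation at every point of $T$ while not creating any new accumulation points outside $T$. Taking the perturbation radius equal to $1/n$ at step $n$ achieves both requirements simultaneously, because it tends to zero (so accumulation is preserved) while also being eventually smaller than any fixed $\varepsilon>0$ (so the triangle-inequality argument above excludes points outside $T$).
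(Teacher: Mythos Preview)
Your proof is correct and also yields the stronger statement recorded in the paper as Remark~3.1 (only finitely many graph points lie more than $\varepsilon$ from $T$), since the distance from $(x_n',y_n)$ to the point $(x_n,y_n)\in T$ is at most $1/n$.

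The paper takes a somewhat different route. Rather than choosing a single dense sequence in $T$ and perturbing it, the paper truncates to the compact pieces $T_n=T\cap(I\times[-n,n])$, covers each $T_n$ by finitely many balls of radius $1/n$, and places one graph point in each ball of the finite cover (with distinct $x$-coordinates). Your density-plus-perturbation argument is shorter and avoids the compactness step entirely; the paper's finite-cover approach, on the other hand, makes the stages $H_n$ finite by construction and makes the ``only finitely many points far from $T$'' property immediate without an extra estimate. Both methods hinge on the same idea---shrinking the allowed deviation from $T$ to zero as $n\to\infty$---and both deliver exactly what is needed later in the paper.
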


\begin{proof} Let $T_i=(I\times[-i,i])\cap{T}$ for all $i\in\mathbb{N}$. Then every $T_i$ is compact. Let us consider an open ball of radius one around each point of $T_1$. These open balls cover $T_1$, hence it is possible to choose a finite covering. Let us take a point in each chosen open ball such that the $x$ coordinates of these points are pairwise different. Let us denote the set of these points by $H_1$, and the set of their $x$ coordinates by $A_1$.

Now, similarly, let us consider open balls with radius $\frac{1}{2}$ around each point of $T_2$ and choose a finite covering, then finally take points in these chosen neighborhoods and define $H_2$ and $A_2$ analogously. We can continue this procedure by induction: in the $n^{\textsl{th}}$ step we consider the $\frac{1}{n}$-neighborhoods of the points of $T_n$, and we define the finite sets $H_n$ and $A_n$ using these open balls.

Let $A=\cup_{n=1}^{\infty}{A_n}$ and $H=\cup_{n=1}^{\infty}{H_n}$. These are countable sets. Let $f$ be the function that assigns to every $x\in{A}$ the $y$ coordinate of the chosen point above $x$. Then this point of the graph is clearly a point of $H$. We would like to prove that $L_f=T$ for this function $f$. We do this by verifying two containments.

\begin{enumerate}

\item $T\subseteq{L_f}$. Let us consider any point $P$ of $T$. By definition, $P\in{T_k}$ for a suitable $k$ positive integer. Thus for every $n$ larger than $k$ there exists a point $x_n\in{A_n}$ such that the distance of $(x_n,f(x_n))$ and $P$ does not exceed $\frac{1}{n}$. Therefore, there exists a sequence of distinct points in $G$ that converges to $P$, hence $T\subseteq{L_f}$.

\item $L_f\subseteq{T}$. Let us consider any point $P$ of $L_f$. Since it is an accumulation point of $G$, there exists a sequence $(p_n)$ in $G$ converging to $P$ and containing each of its terms only once. Now if $k$ is given, for sufficiently large $n$ the point $p_n$ is in $H_m$ with $m\geq{k}$. It means that the distance of $p_n$ and $T$ does not exceed $\frac{1}{k}$. Thus there are points of $T$ arbitrarily close to the sequence $(p_n)$. Therefore, the limit of $(p_n)$ is in $T$, since $T$ is closed. Hence $P\in{T}$ and $L_f\subseteq{T}$. \end{enumerate} \end{proof}

\begin{megj} The above proof shows that there are only finitely many points of the graph $G$ that are more than $\varepsilon$ apart from $T$ for a given $\varepsilon>0$. Later we will use this slightly stronger result. \end{megj}

\section{Functions of Baire Class 2}

As we have promised, we consider the bounded case first. It is obvious that if $L_f=T$, then $T$ must be a compact set, being bounded and closed. There is another condition needed: $T(x)$ is never empty for $x\in{I}$. Indeed, if $(x_n)$ is a sequence that converges to $x$, ($x_n\neq{x}$), the sequence formed by the points $(x_n,f(x_n))$ is a bounded sequence in $\mathbb{R}^2$, and its limit is in $T$, thus $T(x)\neq\emptyset$.

We point out that until this point we have not used the Baire-2 property of the function $f$. Despite that, as we will see, these conditions are also sufficient:

\begin{theorem} Suppose $T\subseteq{I}\times\mathbb{R}$. There exists a bounded Baire-2 function $f:I\rightarrow\mathbb{R}$ such that $L_f=T$ if and only if 
\begin{itemize}
\item $T$ is compact,
\item $T(x)$ is nonempty for $x\in{I}$. 
\end{itemize}
\end{theorem}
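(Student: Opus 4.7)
The necessity of the conditions is already contained in the discussion preceding the theorem statement: $L_f$ is always closed, boundedness of $f$ makes $L_f$ bounded and thus compact, and a subsequence argument extracted from any $x_n \to x$ exhibits a point of $T(x)$. I would only record this briefly and move on to sufficiency.

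For sufficiency, the plan is a two-layer construction. First I would apply Lemma~3.1 to obtain a countable set $A \subseteq I$ and a function $f_0 : A \to \mathbb{R}$ with $L_{f_0} = T$. Since $T$ is compact, the construction in Lemma~3.1 places every point of $G_0$ within distance $1$ of $T$, so $f_0$ is automatically bounded. The remaining task is to extend $f_0$ from $A$ to all of $I$ without creating any new accumulation points and while keeping $f$ inside the Baire-2 class.

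The extension I would use is a Baire-1 selection of the multifunction $x \mapsto T(x)$; concretely, $g(x) = \min T(x)$. This is well defined because $T(x)$ is a nonempty closed subset of the compact set $T$, hence itself compact. The key observation is that $g$ is lower semicontinuous: if $x_n \to x_0$ and a subsequence $x_{n_k}$ realizes $L = \liminf g(x_n)$, then $(x_{n_k}, g(x_{n_k})) \in T$ converges to $(x_0, L)$, so $L \in T(x_0)$ and $L \geq g(x_0)$. Any bounded lower semicontinuous function on $I$ is Baire-1 (it is the pointwise limit of the continuous functions $g_n(x) = \inf_{y \in I}(g(y) + n|x-y|)$), so $g$ is Baire-1. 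I would then define $f = f_0$ on $A$ and $f = g$ on $I \setminus A$.

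To finish, I would check both $L_f = T$ and that $f$ is Baire-2. The inclusion $T = L_{f_0} \subseteq L_f$ is immediate from $G_0 \subseteq G$; conversely, any $P \in L_f$ is approached by a sequence of distinct points of $G$, which either has infinitely many in $G_0$ (so $P \in L_{f_0} = T$) or infinitely many in $\{(x,g(x)) : x \in I \setminus A\} \subseteq T$ (so $P \in T$ by closedness). For the Baire-2 property, enumerate $A = \{a_1, a_2, \ldots\}$ and let $f_n$ agree with $f_0$ on $\{a_1,\ldots,a_n\}$ and with $g$ elsewhere; each $f_n$ differs from the Baire-1 function $g$ on only finitely many points and is thus itself Baire-1, while $f_n \to f$ pointwise. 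Boundedness follows from the bounds on $f_0$ and on $T$. I expect the main obstacle to be the selection step: any pointwise choice in $T(x)$ is easy, but forcing the values to assemble into a Baire-1 function requires compactness of $T$ to underpin the semicontinuity argument above.
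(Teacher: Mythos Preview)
Your proof is correct and follows essentially the same strategy as the paper: a semicontinuous selection from $T(x)$ (the paper uses $\max$ and upper semicontinuity, you use $\min$ and lower semicontinuity) gives a Baire-1 backbone, which is then modified on the countable set furnished by Lemma~3.1 to achieve $L_f=T$. The only noteworthy difference is that, after acknowledging the semicontinuity argument suffices, the paper additionally reproves that $f_0$ is Baire-1 directly via the open-strip characterization of Proposition~3.1, as a warm-up for the technique used in the later, harder theorems.
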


\begin{proof} Before beginning the formal proof, we give a short sketch. First, we construct a function $f_0$ such that $f_0(x)\in{T(x)}$ for every $x\in{I}$. After this step, we apply Proposition 3.1 to prove that $f_0$ is a Baire-1 function. Finally, we use Lemma 3.1 to modify $f_0$ on a countable set $A$ to obtain a bounded Baire-2 function $f$ such that $L_f=T$.

Put $f_0(x)=\max(T(x))$ for every $x\in{I}$. Since $T(x)$ is nonempty, this definition makes sense. The function $f_0$ is Baire-1; this is a well-known fact since $f_0$ is upper semicontinuous and every upper semicontinuous function is Baire-1. Nevertheless, it is useful to find a direct proof which uses Proposition 3.1 to understand better how this theorem works.

We define a nested sequence of open strips, $(S_n)$. First, we  construct a subset $S_n'$ of $S_n$, that is the union of certain neighborhoods of points of $G_0$. Let the radius of such an open ball be $\varepsilon_{x,n}$, where $\varepsilon_{x,n}$ satisfies the following three conditions: $\varepsilon_{x,n}\leq{\frac{1}{n}}$ and $\varepsilon_{x,n}\leq\varepsilon_{x,n-1}$ for every $n\geq{2}$. It is obviously possible. Moreover, we have a bit more complicated so-called overlapping condition related to the projection of the open balls $B((x,f_0(x)),\varepsilon_{x,n})$ to the $x$-axis. Specifically:

$$ \forall x\in{I}, \forall n\in\mathbb{N}, \forall r\in\mathbb{R}, r\in{B(x,\varepsilon_{x,n})}\text{ we have } f_0(r)-f_0(x)<\frac{1}{n} \text{ .}$$

\noindent Such $\varepsilon_{x,n}$ can be chosen. If not, then there is a sequence $(x_k)$ that converges to $x$ and $f_0(x_k)\geq{f_0(x)+\frac{1}{n}}$ for every $k$. In this case $(f_0(x_k))$ is a bounded sequence, so it has a convergent subsequence. As a consequence, the sequence $(x_k,f_0(x_k))$ has a limit point in the plane whose first coordinate is $x$, and whose second coordinate is larger than $f_0(x)=\max(T(x))$ by at least $\frac{1}{n}$. Since $T$ is closed, it is a contradiction. 

\noindent Thus for every $n\in\mathbb{N}$ and $x\in{I}$, we can choose some $\varepsilon_{x,n}$ satisfying all three of our conditions. By taking the union of the neighborhoods $B((x,f_0(x)),\varepsilon_{x,n})$, we obtain an open set $S_n'$ containing $G_0$ for every $n$. Also $S_n'\subseteq{S_{n-1}'}$ for every $n\geq{2}$, since $S_n'$ is the union of open balls with the same centers and smaller radii. However, it is not sufficient for us: our aim is to construct open strips. But this problem can be solved easily. Specifically, there is a simple way to extend an arbitrary open set $H'$ to an open strip $H$: for every $x$, let $H(x)=(\inf(H'(x)),\sup(H'(x))$. Figure 1 demonstrates such an extension, in a case where $H'$ is the union of a few open disks: $H$ is the open set bounded by the dashed lines. It is plain to see that the set $H$ made this way is an open strip which contains $H'$. We also use this method to construct $S_n(x)$ by extending $S_n'(x)$. The property $S_n\subseteq{S_{n-1}}$ is obviously preserved during the extension.

\begin{figure}[h!]
  \includegraphics[width=340pt]{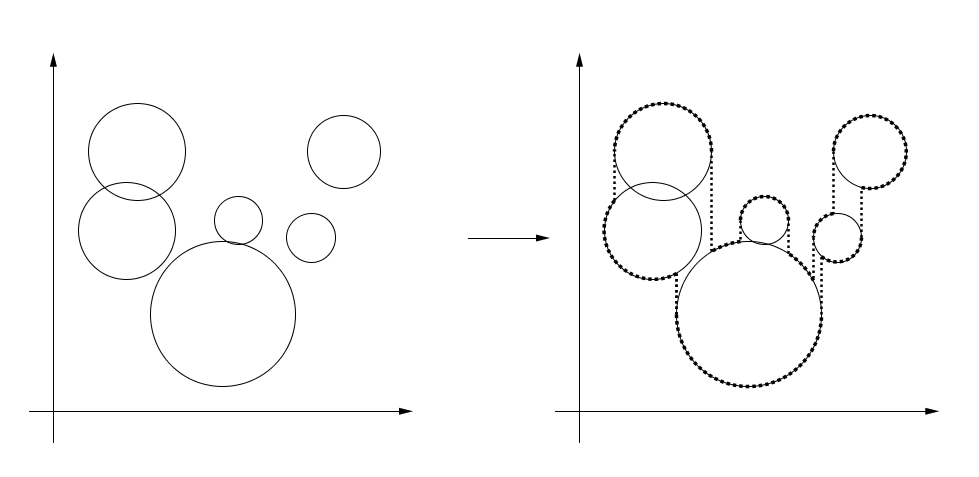}
  \caption{Extending an open set into an open strip}
  \label{szalag}
\end{figure}

\noindent To apply Proposition 3.1, we have to verify that $S=\cap_{n=1}^{\infty}S_n=G_0$. It is clear that $S$ contains $G_0$ since $S_n'$ contains every point of $G_0$ for all $n$. We have to show that $S$ has no other points. Proceeding towards a contradiction, let us assume that there exists a point $x\in{I}$ and $y\neq{f_0(x)}$ such that $(x,y)\in{S}$. We distinguish two cases.

\begin{enumerate}

\item The case $y>f_0(x)$. Since $(x,y)\in{S_n}$ for every $n$, the set $S_n'$ has a point $(x,z_n)$ above $(x,y)$. The sequence $(z_n)$ is obviously bounded, hence it has a limit point $z\geq{y}$. But $S_n'$ is formed by open balls whose centers are the points of $G_0\subseteq{T}$ and whose radii are not larger than $\frac{1}{n}$. Thus $(x,z)\in{T}$ as $T$ is closed. So $T$ has a point whose first coordinate is $x$ and whose second coordinate is larger than $f_0(x)=\max(T(x))$, a contradiction.

\item The case $y<f_0(x)$. By a similar argument to the previous one, we might notice that $S_n'$ has a point $(x,z_n)$ below $(x,y)$ for every $n$. Let $k\in\mathbb{N}$ satisfy $y<f_0(x)-\frac{1}{k}$. Then if $n\geq{2k}$, amongst the open balls forming $S_n'$ we might find a ball that intersects $v_x$ and for its center $(x_n,f_0(x_n))$ the inequality $f_0(x_n)<f_0(x)-\frac{1}{2k}$ holds. But by definition, it is impossible: this neighborhood must satisfy the overlapping condition, thus it cannot intersect $v_x$, a contradiction. Hence $f_0$ is a function of Baire class 1.

\end{enumerate} 

\noindent Using Lemma 3.1, we modify $f_0$ on a countable set $A$, so that the accumulation set of the new points above $A$ is $T$. We denote this altered function by $f$. Then it is a bounded Baire-2 function. Nevertheless, if we consider now the whole graph, $L_f=T$ remains true, since every point of the graph above $I\setminus{A}$ is in $T$. Therefore other accumulation points cannot occur. \end{proof}

In the following, we turn our attention to the not necessarily bounded Baire-2 functions. In this case the conditions are more complicated and the proof is a bit more difficult. However, we give a similar characterization.

\bigskip

We approach the problem by finding out some necessary conditions. During that process, we use only that $f:I\rightarrow\mathbb{R}$, as we did earlier in our previous theorem. It is easy to see that $T$ must be closed in this case, too. But it is not true at all that $L_f(x)=T(x)$ must be nonempty for every $x\in{I}$. For instance, let $f$ be the function that vanishes in $0$, and elsewhere its value is $\frac{1}{x}$. Then $L_f(0)$ is empty. Nevertheless, we may suspect that $T(x)$ cannot be empty in any set $C$. Our lemma is the following:

\begin{lemma} If $f:I\rightarrow\mathbb{R}$ and $C=\{x\in{I} : L_f(x)=\emptyset\}$, then $C$ is countable. \end{lemma}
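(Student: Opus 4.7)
The plan is to reduce this to a classical counting argument by first giving an analytic characterization of membership in $C$. Specifically, I would show that $x \in C$ if and only if $|f(t)| \to \infty$ as $t \to x$ with $t \in I\setminus\{x\}$. One direction is immediate: if $|f|$ blows up near $x$, any sequence of graph points over punctured neighborhoods of $x$ is unbounded, so no accumulation point of $G$ can lie on $v_x$. For the other direction, if $|f|$ does not blow up, one extracts from a sequence $t_n\to x$ (chosen pairwise distinct and different from $x$, so that the graph points $(t_n,f(t_n))$ are themselves distinct) a subsequence along which $f$ converges; the resulting limit lies on $v_x$ and witnesses $L_f(x)\neq\emptyset$.

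With this characterization in hand, I would invoke the classical strict-local-extremum trick. For each $x\in C$, applying the characterization with threshold $M=|f(x)|+1$ yields some $\delta_x>0$ such that $|f(t)|>|f(x)|+1$ for every $t\in(x-\delta_x,x+\delta_x)\cap I\setminus\{x\}$. I would then pick rationals $p_x,q_x$ satisfying $x-\delta_x<p_x<x<q_x<x+\delta_x$, and assign to each $x\in C$ the pair $(p_x,q_x)\in\mathbb{Q}^2$.

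Finally, injectivity of this assignment is straightforward: if distinct $x_1,x_2\in C$ mapped to the same $(p,q)$, both would lie in $(p,q)$, so applying the defining inequality at $x_1$ with $t=x_2$ and at $x_2$ with $t=x_1$ would give $|f(x_2)|>|f(x_1)|+1$ and $|f(x_1)|>|f(x_2)|+1$, a contradiction. Hence $C$ embeds into $\mathbb{Q}^2$ and is countable. The main obstacle is really just the first step --- obtaining the correct analytic translation of $L_f(x)=\emptyset$, with care taken about distinctness of the graph points; once that is secured, the remainder is the standard observation that a function can have only countably many strict local extrema.
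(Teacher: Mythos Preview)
Your argument is correct. The analytic characterization of $C$ is right (and only the direction ``$L_f(x)=\emptyset \Rightarrow |f(t)|\to\infty$ near $x$'' is actually needed for the count), and the strict-local-extremum encoding into $\mathbb{Q}^2$ is carried out cleanly.

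The paper takes a different, somewhat shorter route. It argues by contradiction: assuming $C$ uncountable, it stratifies $C$ by the size of $|f|$, setting $C_n=\{x\in C:|f(x)|<n\}$, so some $C_n$ is uncountable and hence contains one of its own limit points $c$. A sequence $(c_i)$ in $C_n\setminus\{c\}$ converging to $c$ then has $(f(c_i))$ bounded by $n$, yielding a convergent subsequence and a point of $L_f(c)$, contradicting $c\in C$. Both proofs rest on the same compactness kernel (a bounded sequence of graph values over points approaching $x$ forces $L_f(x)\neq\emptyset$); the difference is organizational. The paper's pigeonhole-plus-condensation argument is quicker, while your approach has the side benefit that the characterization of $C$ as the blow-up set of $|f|$ is independently useful---for instance, it makes the later observation that $C$ is $G_\delta$ essentially immediate.
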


\begin{proof} Proceeding towards a contradiction, let us assume that $C$ is uncountable. Put $C_n=\{x\in{C} : |f(x)|<n\}$ for every $n\in\mathbb{N}$. Then $C=\cup_{n=1}^{\infty}{C_n}$, and there exists an uncountable $C_n$. As a consequence, it contains one of its limit points, $c$. Thus there exists a sequence $(c_i)$ in $C_n$ ($c_i\neq{c}$) that converges to $c$. Since $(f(c_i))$ is bounded, it has a convergent subsequence, therefore $L_f(c)$ cannot be empty, a contradiction. \end{proof}

\bigskip

We state that these necessary conditions are also sufficient, namely:

\begin{theorem} Suppose $T\subseteq{I}\times\mathbb{R}$.  There is a Baire-2 function $f:I\rightarrow\mathbb{R}$ such that $L_f=T$ if and only if 
\begin{itemize}
\item $T$ is closed,
\item there is a countable $C\subseteq{I}$ such that $T(x)$ is nonempty for $x\in{I\setminus{C}}$.
\end{itemize}
\end{theorem}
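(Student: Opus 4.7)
The plan is to follow the two-step strategy of Theorem 4.1, generalizing to allow the function to be unbounded and to accommodate the exceptional set $C$. Necessity is dealt with in the discussion preceding the statement. For sufficiency, I would first construct an auxiliary Baire-2 function $f_0: I \to \mathbb{R}$ whose graph's accumulation points all lie in $T$, and then use Lemma 3.1 to modify $f_0$ on a countable set in order to enlarge its accumulation set to all of $T$.

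To construct $f_0$, for each $n \geq 1$ set $K_n = \{x \in I : T(x) \cap [-n,n] \neq \emptyset\}$. A sequential compactness argument using the closedness of $T$ shows each $K_n$ is closed, and $\bigcup_n K_n = I \setminus C$. Setting $E_n = K_n \setminus K_{n-1}$ (with $K_0 = \emptyset$), the sets $E_n$ partition $I \setminus C$. Define $\phi_n(x) = \max(T(x) \cap [-n,n])$ on $K_n$; imitating the upper semi-continuity argument from the proof of Theorem 4.1 on the compact slice $T \cap (I \times [-n,n])$, each $\phi_n$ is upper semi-continuous on $K_n$, so it extends to a Baire-1 function $\tilde\phi_n$ on $I$. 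Enumerating $C = \{c_1, c_2, \ldots\}$, define $f_0(x) = \phi_n(x)$ for $x \in E_n$ and $f_0(c_k) = k$.

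I would then verify two properties of $f_0$. First, $L_{f_0} \subseteq T$: for $x \in I \setminus C$ we have $(x, f_0(x)) \in T$, so any accumulation point reached through points above $I \setminus C$ lies in the closed set $T$; any sequence of distinct graph points above $C$ has $y$-coordinates equal to distinct positive integers, hence no subsequence converges in $\mathbb{R}^2$, which means points above $C$ contribute nothing to $L_{f_0}$. Second, $f_0$ is of Baire class 2: writing $f_0 = \sum_{n \geq 1} \tilde\phi_n \mathbf{1}_{E_n} + \sum_{k \geq 1} k \mathbf{1}_{\{c_k\}}$ and decomposing $\tilde\phi_n \mathbf{1}_{E_n} = \tilde\phi_n \mathbf{1}_{K_n} - \tilde\phi_n \mathbf{1}_{K_{n-1}}$ displays each summand as a difference of products of two Baire-1 functions---and hence Baire-1---so each partial sum of $f_0$ is Baire-1 and $f_0$ itself is a pointwise limit of Baire-1 functions, i.e.\ Baire-2.

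Finally, applying Lemma 3.1 to $T$ yields a countable $A \subseteq I$ and $g: A \to \mathbb{R}$ with $L_g = T$. Set $f(x) = g(x)$ on $A$ and $f(x) = f_0(x)$ elsewhere. Then $T = L_g \subseteq L_f$ since $G_g \subseteq G_f$, and conversely any accumulation point of $G_f$ is reached through infinitely many points of one of its two pieces $G_g$ or $G_{f_0}|_{I \setminus A}$, so $L_f \subseteq L_g \cup L_{f_0} \subseteq T$. Since $f$ differs from the Baire-2 function $f_0$ only on the countable set $A$, it remains Baire-2. The main obstacle is the combined control of selection and Baire class: the piecewise choice $f_0(x) = \max(T(x) \cap [-n,n])$ on $E_n$ (together with the escape-to-infinity values on $C$) is what makes both the $L_{f_0} \subseteq T$ argument clean and the Baire-2 verification go through via the telescoping identity above, and a priori it is not obvious that any reasonable selection would keep $f_0$ within Baire class 2 rather than pushing it up to Baire class 3.
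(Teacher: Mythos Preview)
Your argument is correct and the construction of $f_0$ coincides with the paper's (your $K_n$, $E_n$, $\phi_n$ are the paper's $U_n$, $V_n$, and the selection $\max(T(x)\cap[-n,n])$, and the assignment $f_0(c_k)=k$ is identical). The genuine difference is in how the Baire class of $f_0$ is established. The paper proves the stronger fact that $f_0$ is Baire-1, via the strip characterization of Proposition~3.1: it builds nested open strips around $G_{f_0}$, controlling the radii separately over $C$ (using that $C$ is $G_\delta$) and over each $V_k$ (using an overlapping condition as in Theorem~4.1), and then argues case by case that the intersection of the strips is exactly $G_{f_0}$. You instead settle for Baire-2 directly, via the telescoping identity $\tilde\phi_n\mathbf 1_{E_n}=\tilde\phi_n\mathbf 1_{K_n}-\tilde\phi_n\mathbf 1_{K_{n-1}}$ and the observation that each term is a product of Baire-1 functions, so the partial sums are Baire-1 and their pointwise limit is $f_0$.

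Your route is shorter and more elementary for this theorem, and it avoids the somewhat delicate strip bookkeeping. The trade-off is that the paper's Baire-1 conclusion for $f_0$ is not wasted: the same construction and the same strip argument are recycled verbatim in the proof of Theorem~5.2 (the unbounded Baire-1 characterization), where one genuinely needs $f_0$ to be Baire-1 rather than merely Baire-2. So the paper is investing here in machinery it will need later, while your approach is optimized for the statement at hand. One small point worth making explicit in your write-up: you are tacitly taking $C=\{x:T(x)=\emptyset\}$ (so that indeed $\bigcup_n K_n=I\setminus C$ and the $c_k$ lie outside every $K_n$); this is harmless since that set is contained in any admissible $C$ and hence countable. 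The extension of the bounded u.s.c.\ $\phi_n$ from $K_n$ to $I$ can simply be $\tilde\phi_n=-n$ on $I\setminus K_n$, which keeps it u.s.c.
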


\begin{proof}
The concept of the proof is similar to our proof given for the bounded case. We begin by the construction of a function $f_0$ and then we prove that it is a Baire-1 function. The desired function $f$ will be obtained by modifying $f_0$ on a countable set using Lemma 3.1.

\medskip

We start by observing that $C$ is a $G_\delta$ set. Suppose $c\in{C}$. Since $T$ is closed, it has a $B_{c,n}$ neighborhood for every $n\in\mathbb{N}$ such that for all $x\in{B_{c,n}}$ distinct from $c$, the absolute value of every element of $T(x)$ is larger than $n$. Otherwise $T(c)$ would not be empty. Then for a given $n$, the set $B_n=\cup_{c\in{C}}{B_{c,n}}$ is an open set containing $C$. On the other hand, clearly $\cap_{n=1}^{\infty}{B_n}=C$. Hence the set $C$ is $G_\delta$, as we wanted to show.

Now, we begin the construction of our function. The easier part is its definition on $C$. We consider an enumeration of the countable set $C=\{c_1, c_2, ...\}$ and we let $f_0(c_n)=n$ for every $n$. However, the definition of $f_0$ in $I\setminus{C}$ cannot be as straightforward as it was in our previous proof. Namely, it is possible that $T(x)$ has no maximum. Therefore we have to be more careful.

For every $n\in\mathbb{N}$, let

\begin{equation}\tag{4.2}\label{a} U_n=\{x\in{I} : \exists r\in{T(x)}, |r|\leq{n}\}. \end{equation}

\medskip

\noindent As $T$ is closed, it is easy to see that each $U_n$ is closed, too. It is also obvious that $U_{n}\subseteq{U_{n+1}}$ and $\cup_{n=1}^{\infty}{U_n}=I\setminus{C}$. Thus, for every $x\in{I\setminus{C}}$ there is a smallest $n_x$ such that $x\in{U_{n_x}}$. Using this property, we may define $f_0(x)$ as the largest element of $T(x)$, whose absolute value does not exceed $n_x$. We can do so since $T(x)$ is closed and it has such an element. The inequalities $n_x-1<|f_0(x)|\leq{n_x}$ are also true, as otherwise $x$ would be the element of $U_m$ for some $m<n_x$. (Or, if $n_x=1$, then $0=n_x-1\leq|f_0(x)|\leq{n_x}=1$.)

Now, we have defined $f_0$ on $I$. We would like to use Proposition 3.1 to show that $f_0$ is Baire-1. In order to do this, we construct the open strip $S_n$ for every $n$. First, we define the open set $S_n'$ constisting of some balls $B((x,f_0(x)),\varepsilon_{x,n})$. We select $\varepsilon_{x,n}$ so that $\varepsilon_{x,n}\leq\frac{1}{n}$ and $\varepsilon_{x,n}\leq\varepsilon_{x,n-1}$ for every $n\geq{2}$, as we did earlier. Nevertheless, as we defined $f_0$ differently in certain sets, our further conditions should be case-specific: we handle separately the case $x\in{C}$ and the case $x\in{I\setminus{C}}$.

\begin{enumerate}[(i)]

\item The case $x\in{C}$. It means that $x=c_k$ for some $k$. Let 
$$E_n=\cup_{x\in{C}}{B((x,f_0(x)),\varepsilon_{x,n})}\text{,}$$
and $F_n$ be its projection onto the $x$-axis, that is $F_n=\cup_{x\in{C}}{B(x,\varepsilon_{x,n})}$. Let us choose these neighborhoods such that $\cap_{n=1}^{\infty}F_n=C$. It is possib\-le since $C$ is a $G_\delta$ set. Furthermore, we also demand that $B(c_k,\varepsilon_{c_k,n})$ does not contain the points $c_1, ..., c_n$, with the exception of $c_k$. We remark that these conditions imply $\cap_{n=1}^{\infty}E_n$ equals the graph of $f_0|C$.

\item The case $x\in{I\setminus{C}}$. Let us make some remarks concerning this complementary set. Let $V_1=U_1$, and for $n\geq{2}$, let $V_n=U_{n}\setminus{U_{n-1}}$. Then the set $V_n$ is $F_\sigma$ for every $n$, as the difference of closed sets. Consequently, there exist closed sets $V_{n,i}$ for every $n$ and $i$ such that $V_n=\cup_{i=1}^{\infty}{V_{n,i}}$. We can take an enumeration $W_1, W_2, ...$ of the sets $V_{n,i}$. Let $x\in{V_k}$. We can suppose that the $\varepsilon_{x,n}$ are chosen so that $B(x,\varepsilon_{x,n})$ does not contain the points $c_1, c_2, ..., c_n$. Furthermore, we can suppose that $B(x,\varepsilon_{x,n})$ does not intersect the sets $W_1, W_2, ..., W_n$, except for those which contain $x$. Finally, we have a special overlapping condition, namely that $f_0(r)-f_0(x)<\frac{1}{n}$ for every $r\in{B(x,\varepsilon_{x,n})}\cap{V_k}$. One can prove that this condition can be satisfied as we proved it last time, in the bounded case. It is worth mentioning that if $f_0(x)<0$, then $(x,-(k-1))$ cannot be a limit point of a sequence of points in $G_0$ above $I\setminus{C}$. Since $T$ is closed, if such a sequence would exist, then $(x,-(k-1))\in{T}$. But it means that $x\in{U_{k-1}}$, hence $x\notin{V_k}$.

\end{enumerate}

\noindent Now the open set $S_n'$ is defined for each $n$. As in the bounded case, our next step is making strips of these open sets: let $S_n(x)=(\inf(S_n'(x)),\sup(S_n'(x)))$ for every $x\in{I}$. Set $\cap_{n=1}^{\infty}S_n=S$ and similarly $\cap_{n=1}^{\infty}S_n'=S'$. We are going to show that $S=G_0$. Since $G_0\subseteq{S}$ is obvious, we can focus on proving $S\subseteq{G_0}$, or equivalently, proving that $S$ has no point outside of $G_0$. We examine the relation of these sets independently for every $x\in{I}$: our goal is $S(x)\subseteq{G_0(x)}$. We distinguish the same cases which we distinguished during the construction of $S_n'(x)$:

\begin{enumerate}

\item The case $x\in{C}$, that is, $x=c_k$ for some $k$. Let us consider the set $S_n'(x)$. If $n\geq{k}$, amongst the open neighborhoods forming $S_n'$ there can be only one that intersects $v_x$: the neighborhood of $(x,f_0(x))$. Thus for sufficiently large $n$ the equality $S_n'(x)=S_n(x)$ holds, and $S_n'(x)$ contains only one open interval whose radius is $\frac{1}{n}$. Hence if $n$ converges to infinity, we find that the only element of $S(x)$ is $f_0(x)$. Therefore $S(x)\subseteq{G_0(x)}$.

\item The case $x\in{I\setminus{C}}$. It means $x\in{V_k}$ and $x\in{W_m}$ for some $k$ and $m$. Let us consider $S_n'(x)$. We would like to find out for which $r$ the open ball $B((r,f_0(r)),\varepsilon_{r,n})$ can intersect $v_x$. It is clear that for sufficiently large $n$ a neighborhood around a $(c_i,f_0(c_i))$ cannot do so as the intersection of these open balls are exactly the graph of $f_0|C$. Furthermore, if $n\geq{m}$, then the neighborhood chosen around $(r,f_0(r))$ can intersect $v_x$ if and only if $r\in{W_m}$. Indeed, we have chosen these neighborhoods such that they do not intersect $W_1, W_2, ..., W_n$, unless those which are containing $r$. Thus if $n$ is large enough, $v_x$ can be intersected by a certain $B((r,f_0(r)),\varepsilon_{r,n})$ only if $r\in{W_m}$. Only these places are relevant if we want to find out what $S(x)$ is. But how did we define $W_m$? It is a subset of $V_k$ thus the values of $f_0$ in $W_m$ are between $k-1$ and $k$. It is important to us that $f_0$ is bounded here, and $f_0(x)=\max(T_k(x))$ for each element of $W_m$, where $T_k=(I\times[-k,k])\cap{T}$, as in Lemma 3.1. Therefore, in the relevant places we defined $f_0$ as we would have done in Theorem 4.1, if we had regarded $T_k$ instead of $T$. Consequently, in this case one can conclude the proof of $S(x)\subseteq{G_0(x)}$ as it was done there.

\end{enumerate}

\noindent After these observations, the conclusion of the proof is clear. We use Lemma 3.1 as we did just before and alter the function on a countable set $A$, such that $L_f=T$ for the resulting function $f$. Then $f$ is obviously a Baire-2 function. \end{proof}

By proving this theorem we finished our characterization of accumulation points of Baire-2 functions. On the other hand, our proofs clarified that for any ordinal number $\alpha$ larger than $2$ the Baire-$\alpha$ functions are not interesting concerning our question. Namely, the accumulation set of the graph of a Baire-$\alpha$ function is also the accumulation set of a Baire-2 function. This fact explains why we examine only the Baire-1 and Baire-2 functions.

\section{Functions of Baire Class 1}

First, we focus again on the bounded case. Since Baire-1 functions are also Baire-2 functions, the conditions we found earlier recur in this case: $T$ should be compact and $T(x)$ should be nonempty, if $x\in{I}$. Nevertheless, it is clear, that these conditions are not sufficient. Namely, if $L_f=T$ and for a given $x$ the set $T(x)$ has multiple elements, then $f$ is discontinuous at $x$. But a Baire-1 function cannot have an arbitrary set of discontinuities: it must be a meager $F_\sigma$ set. Thus if $D=\{x: \#(T(x))>1\}$, then $D$ should be a meager $F_\sigma$ set. As we will see, these conditions suffice. However, before the statement of the actual theorem, let us notice that if we require $T$ to be closed, then it is redundant to require $D$ to be $F_\sigma$. Indeed, let $D_n=\{x: \text{diam}(T(x))\geq\frac{1}{n}\}$ for each $n\in\mathbb{N}$. Then it is easy to see that these sets are closed and their union is $D$. (Moreover, each $D_n$ is nowhere dense, otherwise some of them would contain an interval, and $D$ cannot do so.) Consequently, $D$ is an $F_\sigma$ set. Using this fact, our theorem is simply the following:

\begin{theorem} Suppose $T\subseteq{I}\times\mathbb{R}$.  There is a bounded Baire-1 function $f:I\rightarrow\mathbb{R}$ such that $L_f=T$ if and only if 
\begin{itemize}
\item $T$ is compact,
\item $T(x)$ is nonempty, if $x\in{I}$,
\item the set $D=\{x: \#(T(x))>1\}$ is meager.
\end{itemize}
\end{theorem}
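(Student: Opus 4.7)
The necessity of the three conditions is essentially addressed in the paragraph preceding the theorem. Compactness of $T$ and nonemptiness of $T(x)$ are inherited from the bounded case of Theorem 4.1 --- nothing in that argument required the function to be Baire 2 rather than Baire 1. That $D$ must be meager follows from the classical fact that the set of discontinuities of a Baire-1 function is meager (and $F_\sigma$): any $x \in D$ is a discontinuity of $f$, since $f(x)$ is a single real but the graph accumulates on $v_x$ to two or more distinct values.

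For sufficiency I would follow the blueprint of Theorem 4.1, refined so that the final function is Baire 1. Put $f_0(x) = \max T(x)$; the same open-strip argument reproduced inside the proof of Theorem 4.1 shows that $f_0$ is bounded and Baire 1 (it is upper semicontinuous). Its graph $G_0$ lies in $T$, but $L_{f_0}$ may strictly miss $T$; any missing point $(x_0, y_0) \in T \setminus L_{f_0}$ has $y_0 < \max T(x_0)$ and therefore forces $x_0 \in D$. Apply Lemma 3.1 together with Remark 3.1 to obtain a countable $A \subseteq I$ and values $g(a)$ such that $L_g = T$ and for each $\varepsilon > 0$ only finitely many $(a, g(a))$ are $\varepsilon$-far from $T$. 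Define $f = g$ on $A$ and $f = f_0$ off $A$; the standard two-inclusion argument (as at the end of Theorem 4.1) then gives $L_f = T$.

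The decisive step is to arrange $A$ and $g$ so that the resulting $f$ is not merely Baire 2 (as comes out of Theorem 4.1) but in fact Baire 1, and this is exactly where the meagerness of $D$ enters. Decompose $D = \bigcup_n D_n$ with $D_n = \{x : \text{diam}(T(x)) \geq 1/n\}$ closed and nowhere dense. The plan is to perform the Lemma 3.1 selection in stages matched to this decomposition, at stage $n$ placing the new modification points inside the dense open set $I \setminus D_{n-1}$ and close to the $D_{n-1}$ regions that still need correcting; since every point of $T \setminus L_{f_0}$ has $x$-coordinate in $D$, this is consistent with approximating $T$. With $A$ so arranged, I would then verify the Baire-1 property by imitating the open-strip construction of Theorem 4.1: at every $x \in I \setminus A$ use the overlapping condition inherited from the upper semicontinuity of $f_0$, and at $x \in A$ take $\varepsilon_{x,n}$ small enough that $B(x, \varepsilon_{x,n})$ avoids all earlier modification points and is disjoint from $D_{n-1}$. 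The check $\bigcap_n S_n = G$ then follows the same two-case analysis as in Theorem 4.1 --- the case $y > f(x)$ by closedness of $T$, the case $y < f(x)$ by the overlapping condition, now using that each vertical line $v_x$ with $x \notin A$ meets only finitely many modification balls at each large $n$. I expect this last Baire-1 verification, rather than the identity $L_f = T$, to be the principal obstacle.
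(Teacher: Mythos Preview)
Your approach is the paper's: set $f=\max T$ off a countable $A$, use Lemma~3.1 on $A$, and verify Baire-1 via Proposition~3.1 with the balls around $A$-points made disjoint from the $D_n$. Two places where the paper's execution is cleaner than your sketch are worth noting.

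First, drop the staged placement. The paper simply takes $A\subseteq I\setminus D$ from the outset, which is possible because $I\setminus D$ is dense (every neighborhood meets it). Your version---placing stage-$n$ points only in $I\setminus D_{n-1}$---has a small inconsistency: such a point could lie in $D_n$, and then for $m>n$ the requirement that $B(x,\varepsilon_{x,m})$ miss $D_{m-1}\supseteq D_n\ni x$ is impossible to satisfy. Choosing $A\cap D=\emptyset$ removes the problem entirely.

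Second, the paper organizes the verification $\bigcap_n S_n=G$ by the location of $x$ (three cases: $x\in A$, $x\in D$, $x\in I\setminus(A\cup D)$) rather than by the sign of $y-f(x)$. For $x\in A$ only the ball around $(x,f(x))$ survives; for $x\in D$ the $A$-balls are eventually irrelevant and one quotes the Theorem~4.1 argument verbatim; for $x\notin A\cup D$ one observes that any stray $(x,z)\in S$ would be an accumulation point of $G$, hence $z\in T(x)=\{f(x)\}$. This last case is where your two-sided split is awkward: the overlapping condition governs only balls centered above $I\setminus A$, so ``$y<f(x)$ by the overlapping condition'' does not directly dispose of contributions from $A$-balls, whereas the $\#T(x)=1$ argument handles both sides at once.
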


\begin{proof}

Let us begin the proof by the construction of $f$. First, we use Lemma 3.1 to define $f$ on a countable set $A$ such that the accumulation set of the graph of $f|A$ coincides with $T$. We can suppose that $A$ is disjoint from $D$. Indeed, in any neighborhood of any point $x\in{I}$ there are infinitely many points of $I\setminus{D}$, since $D$ is meager. Thus we have defined $f$ on $A$. On the other hand, on $I\setminus{A}$ let us define $f$ as we did it in the bounded Baire-2 case: let $f(x)=\max(T(x))$. For this $f$, we have $L_f=T$, and obviously $f$ is bounded.

We would like to apply Proposition 3.1 to $f$. We use the usual method: we define the open set $S_n'$ for each $n$, which is the union of open balls around points of the graph with $\varepsilon_{x,n}$ radius, and then we extend these sets to open strips. The conditions concerning $\varepsilon_{x,n}$ will be case-specific, except for the usual size conditions.

\begin{enumerate}[(i)]

\item The case $x\in{A}=\{a_1, a_2, ...\}$. Then $x=a_k$ for some $k$. Our first condition on $\varepsilon_{x,n}$ is that $B(x,\varepsilon_{x,n})$ must not contain the points $a_1, a_2, ..., a_n$, except for $a_k$. The second condition is related to the overlapping of $D$. Since $D$ is a meager $F_\sigma$ set, we can choose $D_1, D_2, ...$ nowhere dense closed sets such that $D=\cup_{n=1}^{\infty}{D_n}$. Moreover, none of these sets contains $x$ since $x\in{A}$ and the sets $A$ and $D$ are disjoint. Therefore, the condition "$B(x,\varepsilon_{x,n})$ and $\cup_{i=1}^{n}{D_i}$ are disjoint" can also be satisfied.

\item The case $x\in{I\setminus{A}}$. First, in order to stay away from the set $A$, the open ball $B(x,\varepsilon_{x,n})$ must not contain the points $a_1, a_2, ..., a_n$. The second condition is identical to the overlapping condition of the bounded \mbox{Baire-2} case: if $r\in{B(x,\varepsilon_{x,n})}\setminus{A}$, then $f(r)-f(x)<\frac{1}{n}$.

\end{enumerate}

\noindent We have finished the construction of the open set $S_n'$, and now, we can extend it to obtain the open strip $S_n$ by taking the infimum and the supremum along each $v_x$. Our goal is to prove that the intersection $S$ of the sets $S_n$ is $G$. Of course, the challenging part is the verification of $S\subseteq{G}$. Let us consider $S(x)$ for each $x$. We separate three cases by the location of $x$:

\begin{enumerate}

\item The case $x\in{A}$, that is $x=a_k$. If $n\geq{k}$, then amongst the neighborhoods forming $S_n'$ there can be only one that intersects $v_x$, namely, the open ball centered at $(x,f(x))$. Therefore, $S_n(x)=S_n'(x)$, and

$$S_n(x)=(f(x)-\varepsilon_{x,n},f(x)+\varepsilon_{x,n})\subseteq\left(f(x)-\frac{1}{n},f(x)+\frac{1}{n}\right) \text{.}$$

\noindent This fact immediately implies that the only element of $S(x)$ is $f(x)$.

\item The case $x\in{D}$. It means that $x\in{D_k}$ for some $k$. Thus if $n\geq{k}$, the neighborhoods $B((a_k,f(a_k)),\varepsilon_{a_k,n})$ cannot intersect $v_x$. Therefore, if $n$ is sufficiently large, if we want to describe $S_n(x)$, we have to deal only with the points in $I\setminus{A}$. But above $I\setminus{A}$ we defined $f$ and the neighborhoods forming $S_n'$ as we defined $f_0$ and $S_n'$ in the proof of Theorem 4.1. Consequently, the proof given there for $S(x)=G_0(x)$ for any $x\in{I}$ works.

\item The case $x\in{I\setminus(A\cup{D})}$. Proceeding towards a contradiction, we assume that $S(x)$ has an element $y$ distinct from $f(x)$. Then $S_n'(x)$ has a point $z_n$ for each $n$ such that $|f(x)-z_n|\geq|f(x)-y|$. By definition, the set $G$ is bounded, thus it is obvious that there exists some $K\in\mathbb{R}$ such that for any $n$ and $x$, the $S_n'(x)$ has no element larger than $K$. It implies that the sequence $(z_n)$ is bounded. Therefore, it has a convergent subsequence whose limit is some $z\in\mathbb{R}$. For this limit $z$ the inequality $|f(x)-z|\geq|f(x)-y|$ also holds, thus $f(x)\neq{z}$. Since there is a point of $G$ whose distance from $(x,z_n)$ does not exceed $\frac{1}{n}$, the point $(x,z)$ is also an accumulation point of $G$, thus $(x,z)\in{L_f}$, a contradiction. Namely, for our $f$ the equation $L_f=T$ holds, however, the only element of $T(x)$ is $f(x)\neq{z}\in{L_f(x)}$.

\end{enumerate}

\noindent Therefore $S=G$, thus we can apply Proposition 3.1. Hence $f$ is a bounded Baire-1 function, such that $L_f=T$. \end{proof}

As we have characterized the bounded Baire-1 functions, now we might focus on the most challenging problem appearing in this paper: the characterization of the not necessarily bounded Baire-1 functions. However, as we will see, during the proof we will apply the same ideas. Following the usual scheme, we begin by thinking about necessary conditions concerning $T$.

\bigskip

The conditions we found during the examination of the general Baire-2 case obviously recur: $T$ is a closed set and $T(x)=\emptyset$ can hold only on a countable subset of $I$. As $T$ is closed, this subset is $G_\delta$. Of course we need more than these simple conditions. We have to pay attention to the fact that a Baire-1 function cannot have an arbitrary set of discontinuities: it must be a meager $F_\sigma$ set, and at points of continuity, $\#(L_f(x))=1$, thus $\#(T(x))=1$. However, we must be careful. In the bounded case, the property $\#(L_f(x))=1$ already guaranteed that $f$ is continuous at $x$, or $f$ has a removable discontinuity at $x$. But in this case, it is not true at all: for instance, if $f(x)=\frac{1}{2x-1}$ for $x>\frac{1}{2}$, and $f(x)=0$ for $x\leq\frac{1}{2}$, then although $L_f\left(\frac{1}{2}\right)=0$, it does not imply that $f$ is continuous at $\frac{1}{2}$ or it has a removable discontinuity there. Therefore, we must pay attention to the infinite limits. If we embed $T$ into $I\times\overline{\mathbb{R}}$ and take its closure $\overline{T}$, then we have to demand that this $\overline{T}$ can intersect the extended vertical lines in multiple points only above a meager $F_\sigma$ set. However, the additional $F_\sigma$ condition is unnecessary since we supposed that $T$ is closed. Indeed, if $D_n=\{x: \text{diam}(\overline{T}(x))\geq\frac{1}{n}\}$, then these sets are nowhere dense closed sets and their union is $D$, hence $D$ is $F_\sigma$.

\medskip

If we collect all of these remarks, we gain a more complicated system of conditions than the ones in the previous cases. We show that it is sufficient.

\medskip

\begin{theorem} Suppose $T\subseteq{I}\times\mathbb{R}$.  There is a Baire-1 function $f:I\rightarrow\mathbb{R}$ such that $L_f=T$ if and only if 
\begin{itemize}
\item $T$ is closed,
\item there is a countable $C\subseteq{I}$, such that $T(x)$ is nonempty for $x\in{I\setminus{C}}$,
\item the set $D=\{x: \#(\overline{T}(x))>1\}$ is meager.
\end{itemize}
\end{theorem}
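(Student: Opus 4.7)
\emph{Necessity} was essentially explained in the paragraphs preceding the theorem statement: closedness of $T$ and countability of $C$ are inherited from the Baire-2 analysis, and $D$ meager comes from the fact that a Baire-1 function has a meager set of discontinuities together with the remark about infinite limits (which is why $\overline T$, not $T$, must be used). The substance of the theorem is \emph{sufficiency}, and the plan is to marry the construction of Theorem 5.1 with the stratification used for the unbounded Baire-2 case in Theorem 4.2.

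First, I would apply Lemma 3.1 to produce a countable $A\subseteq I$ and an assignment $f|A$ with $L_{f|A}=T$, while insisting that $A$ be disjoint from $C\cup D$. This is possible because $C\cup D$ is meager and the proof of Lemma 3.1 only requires freedom to pick the $x$-coordinate inside each small ball. Next, I would extend $f$ to the rest of $I$ exactly as in the proof of Theorem 4.2: enumerating $C=\{c_1,c_2,\dots\}$, set $f(c_n)=n$, and for $x\in I\setminus(A\cup C)$ put $f(x)=\max\{r\in T(x):|r|\le n_x\}$, where $n_x$ is the least integer with $x\in U_{n_x}$. The identity $L_f=T$ follows as in the bounded Baire-1 case: Lemma 3.1 and the remark after it give $T\subseteq L_f$ and show that graph points above $A$ can cluster only inside $T$; graph points above $I\setminus(A\cup C)$ already lie in $T$ and $T$ is closed; and the points $(c_n,n)$ have no finite accumulation because $n\to\infty$.

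The bulk of the work is to verify that $f$ is Baire-1 via Proposition 3.1, i.e.\ to produce nested open strips $S_n$ with $\bigcap_n S_n=G$. I would take $S_n'$ to be the union of balls $B((x,f(x)),\varepsilon_{x,n})$ whose radii satisfy $\varepsilon_{x,n}\le 1/n$ and shrink in $n$, together with case-specific conditions that fuse those of Theorems 5.1 and 4.2: (i) for $x=a_k\in A$, $B(x,\varepsilon_{x,n})$ avoids $\{a_1,\dots,a_n\}\setminus\{a_k\}$, $\{c_1,\dots,c_n\}$ and $D_1\cup\dots\cup D_n$, where $D=\bigcup_i D_i$ is a decomposition into nowhere dense closed sets; (ii) for $x=c_k\in C$ we impose the Theorem 4.2 conditions that guarantee $\bigcap_n F_n=C$, together with the usual avoidance of $\{c_1,\dots,c_n\}\setminus\{c_k\}$ and $\{a_1,\dots,a_n\}$; (iii) for $x\in V_k\cap W_m\subseteq I\setminus(A\cup C)$ we impose avoidance of $\{a_1,\dots,a_n\}$ and $\{c_1,\dots,c_n\}$, of the $W_j$ with $j\le n$ that do not contain $x$, and the overlapping condition $f(r)-f(x)<1/n$ for $r\in B(x,\varepsilon_{x,n})\cap V_k\setminus A$. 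The strip is then $S_n(x)=(\inf S_n'(x),\sup S_n'(x))$.

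The verification $\bigcap_n S_n=G$ splits into three cases. For $x\in A$ the avoidance conditions force only the ball around $(x,f(x))$ to meet $v_x$ for large $n$, so $S(x)=\{f(x)\}$ as in Theorem 5.1(1). For $x\in C$ the conclusion is the same by the Theorem 4.2 analysis. The case $x\in I\setminus(A\cup C)$ is where I expect the main obstacle, since here both the discontinuity set $D$ and the stratification by $U_n$ can interact; however, the $A$-balls eventually avoid any fixed $D_p$ containing $x$, the $C$-balls eventually miss $v_x$ because $\bigcap_n F_n=C$, and the $V_\ell$-balls for $\ell\ne k$ or for $y\in V_k\setminus W_m$ stop contributing by the $W$-avoidance condition. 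What remains is precisely the setup of Theorem 4.1 applied to $T_k$ restricted to $W_m$, where $f$ is bounded by $k$ and defined as $\max T_k(x)$; the bounded Baire-2 argument then gives $S(x)\subseteq\{f(x)\}$. The real subtlety is organising the avoidance conditions so that the $D$-issue of Theorem 5.1 and the stratification of Theorem 4.2 coexist without any ball from a ``wrong'' stratum or a residual piece of $D$ leaking into $S_n(x)$. Once $\bigcap_n S_n=G$ is established, Proposition 3.1 completes the proof that $f$ is Baire-1.
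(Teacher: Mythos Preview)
Your construction of $f$ and the open sets $S_n'$ is essentially the same as the paper's, and your treatment of the cases $x\in A$ and $x\in C$ is correct. The gap is in your third case, $x\in I\setminus(A\cup C)$: you treat it as a single case and argue that ``the $A$-balls eventually avoid any fixed $D_p$ containing $x$,'' after which everything reduces to the Theorem~4.1 setup. But this elimination of the $A$-balls only works when $x\in D$. If $x\in I\setminus(A\cup C\cup D)$, there is no $D_p$ containing $x$, the $D$-avoidance condition on the $A$-balls gives you nothing, and balls centred at points of $A$ accumulating at $x$ may meet $v_x$ for every $n$. Your reduction to Theorem~4.1 on $W_m$ then fails, because the residual $A$-balls are still contributing to $S_n'(x)$.

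The paper therefore splits $I\setminus(A\cup C)$ into two subcases. For $x\in D\setminus C$ it argues exactly as you do (eliminate $A$-balls via $D$-avoidance, eliminate $C$-balls via $\bigcap_n F_n=C$, then quote the Theorem~4.2 analysis). For $x\in I\setminus(A\cup C\cup D)$ it uses a genuinely different argument: assume $S(x)$ contains some $y\neq f(x)$, pick $z_n\in S_n'(x)$ with $|f(x)-z_n|\ge|f(x)-y|$, and pass to a limit point $z\in\overline{\mathbb{R}}$. For large $n$ the ball through $(x,z_n)$ is centred above $I\setminus C$, so either above $A$ (hence within $1/n$ of $T$ by Remark~3.1) or already in $T$; this forces $(x,z)\in\overline T$. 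Since $x\notin D$ means $\#(\overline T(x))=1$ and $f(x)\in\overline T(x)$, one gets $z=f(x)$, a contradiction. This is precisely where the hypothesis on $\overline T$ (rather than $T$) is used, and it cannot be replaced by a stratification-and-avoidance argument of the kind you propose.
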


\begin{proof}

We define $f$ on a countable set $A$, such that the accumulation set of the graph of $f$ restricted to $A$ equals $T$. We do so using the method given in Lemma 3.1. It is easy to see that we can construct such a set $A$ disjoint from $C$ and $D$.

Now let us focus on $I\setminus{A}$. We define $f$ on this set as we defined $f_0$ in the proof of Theorem 4.2. First, if $C=\{c_1,c_2,...\}$, then $f(c_n)=n$ for each $n\in\mathbb{N}$. Besides that we also define $U_n$ as we did it in \eqref{a}. These are closed sets in this case, too, though not necessarily disjoint from $A$. At places which are not in $A$ let us define $f$ as we defined $f_0$ after \eqref{a}: if $x\in{U_n}$, let $f_0(x)$ be the largest element of $T(x)$ which has absolute value not exceeding $n$. Now we are ready with the construction of $f$ and $L_f=T$ clearly holds: if we consider only the points of the graph above $A$, it is true by definition, furthermore, sequences containing infinitely many points of the graph above $C$ cannot converge, and points of the graph above $I\setminus(A\cup{C})$ are in $T$. Thus every accumulation point of $G$ is also the accumulation point of the graph of $f|A$, and the set of these accumulation points is $T$. (We note that $C$ might intersect $D$, a concern that we will address later.)

We would like to apply Proposition 3.1 to $f$ by giving the open sets $S_n'$ formed by neighborhoods of points of $G$ and extending them to open strips. Again, we separate some cases. We also use our familiar notation: $A=\{a_1,a_2,...\}$, $C=\{c_1,c_2,...\}$, and $D=\cup_{n=1}^{\infty}{D_n}$, where $D_n$ is a nowhere dense, closed set for each $n$.

\begin{enumerate}[(i)]

\item The case $x\in{C}$, $x=c_k$. Here, we define our neighborhoods with $\varepsilon_{x,n}$ radius quite comfortably, namely, we can define the sets $E_n$ and $F_n$ as we did it in (i) of the proof of Theorem 4.2 and repeat the conditions used there. Hence we can choose these open balls such that $\cap_{n=1}^{\infty}F_n=C$, and $B(c_k,\varepsilon_{c_k,n})$ does not contain the points $c_1, ..., c_n$, with the exception of $c_k$. We also require that this neighborhood is disjoint from $\{a_1, a_2, ..., a_n\}$. We remark that these conditions imply $\cap_{n=1}^{\infty}E_n$ equals the graph of $f_0|C$.

\item The case $x\in{A}$. We evoke the conditions of (i) of the proof of Theorem 5.1. Namely, $B(x,\varepsilon_{x,n})$ does not intersect the closed sets $D_1, D_2, ..., D_n$, and it does not contain $a_1, a_2, ..., a_n$, with the exception of $x$. Furthermore we give the following additional condition: these neighborhoods have to stay away from $C$, thus they must not contain $c_1,c_2,...,c_n$.

\item The case $x\in{I\setminus(A\cup{C}})$. We evoke the condition system of (ii) of the proof of Theorem 4.2. We define the sets $V_n$ and $W_n$ as we did there: $V_1=U_1$, and $V_n=U_{n}\setminus{U_{n-1}}$ for $n\geq{2}$. Then any set $V_n$ is $F_\sigma$. Let $W_1, W_2, ...$ be an enumeration of the closed sets forming them. Now if $x\in{V_k}$, we require $B(x,\varepsilon_{x,n})$ to be disjoint from $c_1, c_2, ..., c_n$, and also disjoint from the sets $W_1, W_2, ..., W_n$, except for those containing $x$. Furthermore, of course, we give an overlapping condition: $f_0(r)-f_0(x)<\frac{1}{n}$ for each $r\in{B(x,\varepsilon_{x,n})}\cap{V_k}$. These are exactly the conditions we used in (ii) of the proof of Theorem 4.2. The only additional condition is the following: $B(x,\varepsilon_{x,n})$ must not contain the points $a_1,a_2,...,a_n$.

\end{enumerate}

\noindent Thus we have constructed the open set $S_n'$ for each $n$. We extend it in the usual way to form the open strip $S_n$. Our goal is to verify that their intersection $S$ equals $G$. The challenging part is to show that $S$ contains no points distinct from $G$. Let us consider $S(x)$ and $S'(x)$ for each $x$. We separate four cases by the location of $x$:

\begin{enumerate}

\item The case $x\in{C}$, $x=c_k$. This is obvious: if $n\geq{k}$, the only chosen neighborhood that intersects $v_x$ amongst the ones forming $S_n'(x)$ is the neighborhood of $(x,f(x))$, and thus $S_n'(x)=S_n(x)$. Therefore, $S_n(x)$ is an interval whose diameter does not exceed $\frac{2}{n}$ and contains $f(x)$. Thus the only element of $S(x)$ is $f(x)$, as we wanted to show.

\item The case $x\in{A}$, $x=a_k$. We can simply repeat our previous argument: for sufficiently large $n$, there is only one chosen neighborhood that intersects $v_x$, and since the diameters of these neighborhoods converge to $0$, the only element of $S(x)$ is $f(x)$.

\item The case $x\in{D\setminus{C}}$. It means $x\in{D_k}$ for some $k\in\mathbb{N}$. Now, if $n\geq{k}$, the  neighborhood $B((x',f(x')),\varepsilon_{x',n})$ for $x'\in{A}$ cannot intersect $v_x$. It is also true that for sufficiently large $n$, the neighborhood $B((x',f(x')),\varepsilon_{x',n})$ for $x'\in{C}$ cannot intersect $v_x$, since these neighborhoods are nested and their intersection is the graph of $f|C$. Hence it is enough to consider the graph of $f$ above $I\setminus(A\cup{C})$. At these places we defined $f$ and the open balls forming $S_n'$ as we defined $f_0$ and the open balls forming $S_n'$ during the proof of Theorem 4.2. Consequently, case (2) of the proof of Theorem 4.2 can be used to prove $S(x)=G(x)$.

\item The case $x\in{I\setminus(A\cup{C}\cup{D})}$. Proceeding towards a contradiction, let us suppose that $S(x)$ contains some $y\in\mathbb{R}$, where $y\neq{f(x)}$. It means that for every $n$ we can choose a point $z_n$ in $S_n'(x)$, such that $|f(x)-z_n|\geq|f(x)-y|$. Since $z_n\in{S_n'(x)}$, the point $(x,z_n)$ is in one of the open balls forming $S_n'$. Here, if $n$ is sufficiently large, then this ball is centered at a point of the graph above $I\setminus{C}$. Indeed, if $n$ is large enough, the neighborhoods around points of the graph above $C$ cannot intersect $v_x$ by definition.
\noindent Now, the sequence $(z_n)$ has a limit point $z$ in $\overline{\mathbb{R}}$. Obviously, for this $z$ the inequality $|f(x)-z|\geq|f(x)-y|$ also holds, thus $f(x)\neq{z}$. However, if $n$ is sufficiently large, there is a point of the graph not above $C$ whose distance from $(x,z_n)$ does not exceed $\frac{1}{n}$. Consequently, there is a sequence $(p_n)$ of points of the graph above $I\setminus{C}$ such that $(p_n)$ converges to $(x,z)$. Without loss of generality, we might assume that the elements of this sequence are all distinct. Since these points are not above $C$, they are above $A$ or they are also elements of $T$. Nevertheless, if $n$ is sufficiently large, for any given $\varepsilon>0$, a point $p_n$ that is above $A$ cannot be farther than $\varepsilon$ from a point of $T$, as we noted in Remark 3.1. This fact immediately implies $(x,z)\in\overline{T}$, a contradiction, since the only element of $\overline{T}(x)$ is $f(x)$ by our assumptions.

\end{enumerate}

\noindent Hence $S=G$, therefore we might apply Proposition 3.1. Thus $f$ is a Baire-1 function satisfying $L_f=T$. \end{proof}

\section{Concluding Remark}

Before the end of this paper, we would like to point out something in connection with our theorems about the not necessarily bounded functions. Namely, amongst the conditions of the last theorem there was one condition about $\overline{T}$. However, $\overline{T}=\overline{L_f}$ does not necessarily hold for the function we constructed.

\bigskip 

For instance let $T$ be the following closed set: let $C=\{\frac{1}{n} : n\in{\mathbb{N}}\}\cup\{0\}$, $c_1=0$, and for $n\geq{2}$, let $c_n=\frac{1}{n-1}$. For each point $x$ in $I\setminus{C}$ let $T(x)=\{-\frac{1}{d(x,C)}\}$, where $d(x,C)$ is the distance of $x$ from $C$. Then it is easy to see that this set $T$ satisfies the conditions of Theorem 5.2 with regards to the not necessarily bounded Baire-1 functions. It is also true, that $\overline{T}(0)=\{-\infty\}$. Now, let us consider $f$, specifically $\overline{L_f}(0)$. We recall that in our construction $f(c_n)=n$. It implies $\overline{L_f}(0)=\{-\infty,+\infty\}$. It means that although $L_f(0)=T(0)=\emptyset$, $\overline{T}(0)\neq\overline{L_f}(0)$. Thus the sets we examined earlier are equal, but these extended sets are not.

\medskip

This example raises two new questions: if we regard our theorems about the not necessarily bounded Baire-1 and Baire-2 functions and we do not change the conditions, is it possible to construct a function $f$ in each of these cases that satisfies $L_f=T$ and $\overline{L_f}=\overline{T}$ simultaneously? However, we might answer these questions easily:

\begin{prop} Suppose $T\subseteq{I}\times\mathbb{R}$. 
\begin{itemize}
\item If there exists a Baire-2 function satisfying $L_f=T$, then it can be chosen such that $\overline{L_f}=\overline{T}$ also holds.
\item If there exists a Baire-1 function satisfying $L_f=T$, then it can be chosen such that $\overline{L_f}=\overline{T}$ also holds.
\end{itemize}
\end{prop}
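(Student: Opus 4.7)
The plan is to reuse the constructions from Theorems 4.2 and 5.2 unchanged except for the values of $f$ on the exceptional countable set $C$. The counterexample above pinpoints the only defect of those constructions: the uniform assignment $f(c_n)=n$ sends every convergent $C$-subsequence of the graph to $+\infty$ irrespective of whether $+\infty\in\overline{T}(x)$. I would replace it by $f(c_n)=\sigma(c_n)\cdot n$, where the signs $\sigma(c_n)\in\{+1,-1\}$ read off the appropriate infinite branch of $\overline{T}$.

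Two short preparatory observations. First, for every $c\in C$ one has $\overline{T}(c)\subseteq\{-\infty,+\infty\}$ (because $T(c)=\emptyset$ and $T$ is closed in $I\times\mathbb{R}$) and $\overline{T}(c)\neq\emptyset$: approximating $c$ by points of $I\setminus C$, one gets $T$-values at each of which cannot have a finite accumulation point in $T(c)=\emptyset$, so a subsequence must tend to $+\infty$ or $-\infty$. Second, the definition therefore makes sense if we set $\sigma(c)=+1$ when $(c,+\infty)\in\overline{T}$ and $\sigma(c)=-1$ otherwise, in which case $(c,-\infty)\in\overline{T}$.

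With this new definition of $f$ on $C$, the proofs of Theorems 4.2 and 5.2 go through verbatim, because the only properties of $f(c_n)$ they actually use are $|f(c_n)|\to\infty$ and the injectivity of $f|_C$, both still true. In particular the open-strip argument and the conclusion $L_f=T$ carry over, the Baire-1 (resp.\ Baire-2) property is preserved, and the only new thing to check is $\overline{L_f}=\overline{T}$ in $I\times\overline{\mathbb{R}}$. The inclusion $\overline{T}\subseteq\overline{L_f}$ is immediate, since $L_f=T$ lies in $\overline{L_f}$ and the latter, being the accumulation-point set of $G$ in $I\times\overline{\mathbb{R}}$, is closed.

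The crux, and the step I expect to require the most care, is the reverse inclusion. I would take $(x,y)\in\overline{L_f}$ and split by the nature of $y$ and the location of the approximating graph points. Finite $y$ gives $(x,y)\in L_f=T$ at once. For $y=+\infty$ approximated by distinct graph points $(x_n,f(x_n))$, the case when infinitely many $x_n$ lie in $I\setminus C$ is handled by Remark 3.1, which places those graph points within arbitrarily small distance of $T$ (they are in $T$ for $x_n\in I\setminus(A\cup C)$ and within $1/n$ of $T$ for $x_n\in A$), so the limit $(x,+\infty)$ lies in $\overline{T}$. The case when infinitely many $x_n=c_{k_n}\in C$ is where the sign choice pays off: the condition $f(c_{k_n})=\sigma(c_{k_n})\cdot k_n\to+\infty$, combined with $k_n\to\infty$, forces $\sigma(c_{k_n})=+1$ eventually, which by definition of $\sigma$ means $+\infty\in\overline{T}(c_{k_n})$, and a standard diagonal extraction then produces genuine points of $T$ converging to $(x,+\infty)$. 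The case $y=-\infty$ is symmetric, concluding both bullets of the proposition.
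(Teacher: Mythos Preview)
Your proof is correct and follows essentially the same approach as the paper: modify the constructions of Theorems~4.2 and~5.2 only on $C$, replacing $f(c_n)=n$ by $f(c_n)=\pm n$ with the sign dictated by which of $(c_n,\pm\infty)$ lies in $\overline{T}$, and then verify $\overline{L_f}=\overline{T}$ by analysing sequences of graph points according to whether they live over $C$ or over $I\setminus C$. Your write-up is in fact slightly more careful than the paper's in one respect: you explicitly argue that $\overline{T}(c)\neq\emptyset$ for every $c\in C$, so that the sign $\sigma(c)$ is always well defined, whereas the paper's phrasing leaves the ``neither infinity'' case unaddressed.
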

 
\begin{proof} We will appropriately modify the functions we have constructed in the proofs of Theorem 4.2 and Theorem 5.2. It is clear that for those functions $\overline{T}\subseteq{\overline{L_f}}$ holds. Indeed, for any point $t\in{T}$ there are points of $G$ arbitrarily close to $t$. Thus if we consider a point $(x,\infty)$ of $\overline{T}$, then it is also an accumulation point of $G$. Hence if $\overline{L_f}\neq\overline{T}$, then $\overline{T}$ is a proper subset of $\overline{L_f}$.

For those functions it is also clear that if $\overline{L_f}$ has a point $p$ which is not in $\overline{T}$, then it is an accumulation point of the graph of $f|C$. Namely, if we take a sequence $(p_n)$ in $G$ which converges in $I\times\overline{\mathbb{R}}$ and contains only finitely many points of $G$ above $C$, then after a while every term of this sequence is above $A$ or in $T$. The terms above $A$ will get arbitrarily close to a point of $T$ if $n$ is sufficiently large. Thus if we have a point in $\overline{L_f}$ which is a limit point of such a sequence, then it is also a point of $\overline{T}$. Hence if $\overline{L_f}$ has a point outside $\overline{T}$, then there exists a sequence in the graph of $f|C$ converging to this point.

It is a problem we can easily handle in both cases by modifying $f$ on $C$: if $C=\{c_1,c_2,...\}$, then let $|f(c_n)|=n$. The sign is determined by whether $\overline{T}$ contains $(c_n,+\infty)$ or $(c_n,-\infty)$. If both of them occurs, then let $f(c_n)=n$. If we define the function $f$ on $C$ this way, then $L_f$ clearly does not change, the equality $L_f=T$ still holds. Indeed, if a sequence of points of $G$ above $C$ converges to a point in $I\times\overline{\mathbb{R}}$, then the second coordinate of this point is $+\infty$ or $-\infty$. By symmetry, we can consider the $+\infty$ case. For a subsequence $\left(c_{n_k}\right)$ the sequence $\left(c_{n_k},f(c_{n_k})\right)$ converges to some $(x,+\infty)\in{I\times\mathbb{R}}$. We can suppose that all the numbers $f(c_{n_k})$ are positive. Then by definition, in the $\frac{1}{n_k}$ neighborhood of $c_{n_k}$  we might choose a point $a_k$ such that $T(a_k)$ has an element larger than $n_k$. We denote this element of $T$ by $t_k$. Now it is clear that the sequence $(t_k)$ is in $T$ and it also converges to $(x,+\infty)$. Hence all the elements of $\overline{L_f}$ are in $\overline{T}$, too. Thus we constructed a function of the corresponding Baire class satisfying $L_f=T$ and $\overline{L_f}=\overline{T}$ simultaneously. \end{proof}

\subsection*{Acknowledgements} 

I am grateful to Zolt\'an Buczolich for his valuable remarks and for the time he spent with reading this paper several times before submission which helped to improve the quality of the exposition. I am also grateful to P\'eter Maga for his useful comments. Finally, I would like to thank the anonymous referee the thorough report and the linguistic suggestions.

\end{document}